\newtheorem{theorem}{Theorem}[section]
\newtheorem{lemma}[theorem]{Lemma}
\newtheorem{corollary}[theorem]{Corollary}
\newtheorem{definition}[theorem]{Definition}
\numberwithin{equation}{section}
\begin{document}
\title {On the largest prime divisor of $n!+1$}

\author{Li LAI}

\address{School of Mathematical Sciences, Fudan University \\ Shanghai, China}
\email{lilaimath@gmail.com}

%\classification{11D75, 11J25}
%\keywords{}

\begin{abstract}
For an integer $m >1$, we denote by $P(m)$ the largest prime divisor of $m$. We prove that $\limsup_{n \rightarrow +\infty} P(n!+1)/n \geqslant 1+9\log 2>7.238$, which improves a result of Stewart. More generally, for any nonzero polynomial $f(X)$ with integer coefficients, we show that $\limsup_{n \rightarrow +\infty} P(n!+f(n))/n \geqslant 1+9\log2$. This improves a result of Luca and Shparlinski. These improvements come from an additional combinatoric idea to the works mentioned above.
\end{abstract}

\maketitle

\section{Introduction}

Let $n$ be a positive integer, we are interested in the prime divisors of $n!+1$. Does there exist infinitely many positive integers $n$ such that $n!+1$ is a prime number? This is of course wildly open, so we turn our attention to find some lower bounds for the largest prime divisor of $n!+1$.

We denote by $P(m)$ the largest prime divisor of an integer $m > 1$. Clearly, $P(n!+1) > n$. If $p$ is a prime divisor of $n!+1$ for an odd integer $n \geqslant 3$, then Wilson's theorem implies that $p$ also divides $(p-1-n)!+1$. From this, one can easily show that $\limsup_{n \rightarrow +\infty} P(n!+1)/n \geqslant 2$.

As far as I know, the first nontrivial result was obtained by Erd{\"o}s and Stewart \cite{ES76} in 1976. They showed that $\limsup_{n \rightarrow +\infty} P(n!+1)/n > 2$. This problem seems to be forgotten until this century. In 2003, Luca and Shparlinski \cite{LS05a} proved that for any polynomial $f(X) \in \mathbb{Z}[X] \setminus \{ 0 \}$,
\[ \limsup_{n \rightarrow +\infty} \frac{P(n!+f(n))}{n} \geqslant \frac{5}{2}. \]
Later on, Stewart \cite{Ste04} showed that
\[ \limsup_{n \rightarrow +\infty} \frac{P(n!+1)}{n} \geqslant \frac{11}{2}, \]
moreover, for $\varepsilon > 0$, the set of positive integers $n$ for which $P(n!+1) > (11/2-\varepsilon)n$ has positive lower asymptotic density. Recall that for a set $A$ of positive integers, the lower asymptotic density of $A$ is defined by $\liminf_{m \rightarrow +\infty} \#(A \cap \{1,2,\ldots,m\})/m$, where we use $\# S$ to denote the cardinality of a set $S$.

We also mention that in 2002, Murty and Wong \cite{MW02} showed that if $abc$ conjecture is true, then
\[ P(n!+1) > (1+o(1))n\log n.  \]
And in \cite{LS05a}, Luca and Shparlinski showed (unconditionally)
\[ P(n!+1) > n + \left( \frac{1}{4} + o(1) \right)\log n,  \]
which sharpens a result in \cite{ES76}.

In this paper, we improve the constants $5/2$ and $11/2$ in \cite{LS05a,Ste04}.

\begin{theorem}\label{thm1}
Let $f(X) \in \mathbb{Z}[X]\setminus \{ 0 \}$. Then
\[ \limsup_{n \rightarrow +\infty} \frac{P(n!+f(n))}{n} \geqslant 1+9\log2 \approx 7.238. \]
Moreover, for any $\varepsilon > 0$, the set of positive integers $n$ for which $n!+f(n)>1$ and $P(n!+f(n)) >(1+9\log 2 - \varepsilon)n$ has positive lower asymptotic density.
\end{theorem}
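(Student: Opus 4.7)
I would argue by contradiction, building on the framework of Stewart \cite{Ste04} and Luca--Shparlinski \cite{LS05a}, and then incorporate the ``additional combinatoric idea'' advertised in the abstract. Fix $\varepsilon > 0$, put $\kappa := 1 + 9\log 2 - \varepsilon$, and assume for contradiction that the set $\mathcal{B}_\kappa := \{n \geq 1 : n!+f(n) > 1,\ P(n!+f(n)) > \kappa n\}$ has zero lower asymptotic density. Then along a subsequence $N \to \infty$, all but $o(N)$ of the integers $n \leq N$ have $n!+f(n)$ being $\kappa n$-smooth. Primes $p \leq n$ contribute only $O_f(\log n)$ to $\log(n!+f(n)) \sim n\log n$ (since $v_p(n!+f(n)) = v_p(f(n))$ when $p \leq n$), so the bulk of the factorization is carried by prime factors $p \in (n,\kappa n]$.

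\textbf{Wilson reflection and combinatorial enhancement.} For each such $p \mid n!+f(n)$, the congruences $n! \equiv -f(n) \pmod p$, $(p-1)! \equiv -1 \pmod p$, and $(n+1)(n+2)\cdots(p-1) \equiv (-1)^{p-n-1}(p-n-1)! \pmod p$ combine to give
\[
(p-n-1)!\,f(n) \equiv (-1)^{p-n}\pmod p,
\]
pairing $(n,p)$ with a reflected index $m := p-n-1$. Stewart's $11/2$ bound applies this reflection once together with a harmonic averaging; the new ingredient should be a more efficient combinatorial accounting -- for instance, stratifying primes according to the scale of $p/n$ and iterating the reflection across these scales, each level contributing $\log 2$ in place of Stewart's $1/2$. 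The coefficient $9 = (11-2)/2$ is inherited from the Stewart structure, and the improvement from $1/2$ to $\log 2$ presumably reflects passing from an arithmetic mean to an integral of $1/x$ over $[1,2]$.

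\textbf{Counting contradiction and main obstacle.} Setting $\Pi_N := \prod_{n \leq N,\ n \notin \mathcal{B}_\kappa}(n!+f(n))$, Stirling and the density hypothesis give $\log \Pi_N = (1+o(1))\cdot \tfrac12 N^2 \log N$. On the other hand, writing $\Pi_N = \prod_{p \leq \kappa N} p^{E_p}$ and bounding each exponent $E_p$ via the refined combinatorial input yields an upper bound on $\log \Pi_N$ that is compatible with Stirling only if $\kappa \geq 1 + 9\log 2$, contradicting the choice of $\varepsilon$. The positive lower-density refinement follows because the argument works whenever the exceptional set has density less than some $\delta(\varepsilon) > 0$. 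The hardest step is the combinatorial one: one must prove that the reflections $(n,p) \leftrightarrow (m,p)$ can be organized into a weighted count whose total contribution is exactly $9\log 2$ rather than $9/2$, without double-counting across iterations. The polynomial case adds a further difficulty, since the reflection is twisted by $f(n)^{-1} \pmod p$ and one must control uniformly in $p$ the number of $n < p$ solving $n!\,f(n) \equiv \pm 1 \pmod p$, presumably via a Weil-type or resultant estimate.
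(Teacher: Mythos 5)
Your global framework (contradiction, the product $\Pi_N$ of the non-exceptional $n!+f(n)$, Stirling lower bound of order $\tfrac12 N^2\log N$, upper bound via the exponents $E_p$ of primes $p\le\kappa N$) matches the paper's. But the heart of the argument — how one actually bounds $E_p$ for a single prime $p$ — is missing, and the mechanism you propose for it is not the right one. The paper does not use the Wilson reflection $(p-n-1)!\,f(n)\equiv\pm1\pmod p$ anywhere in the proof; that identity is only mentioned in the introduction as the classical route to the trivial bound $\limsup\ge 2$. Instead, for a fixed $p$ one takes the integers $n_1,\dots,n_T<p$ with $p\mid n_j!+f(n_j)$, ordered so that $\operatorname{ord}_p(n_j!+f(n_j))$ is decreasing, and exploits the fact that if $D=p^{\min_{j\le t'}\operatorname{ord}_p(n_j!+f(n_j))}$ then $D$ divides the nonzero integer $f(n_i)\prod_{k=1}^{n_j-n_i}(n_i+k)-f(n_j)$ for each gap $(n_i,n_j]$, and divides a suitable difference of two such products divided by a factorial. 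This converts $\log D$ into roughly $|I|\log(ex/|I|)$ for a \emph{short} gap interval $I$ between consecutive $n_j$'s. Three inputs you do not mention are then essential: the Luca--Shparlinski bound $\#\{n\in J: p\mid n!+f(n)\}\ll|J|^{2/3}$ (which is where the exponent $2/3$, and hence the factor $9=2\cdot(3/2)^2$ in the final constant, comes from); Heath-Brown's estimate $\sum_{p_k\le y}(p_{k+1}-p_k)^2\ll y^{23/18+\varepsilon}$, needed to guarantee that almost all gap intervals contain enough primes for the nonvanishing arguments; and the paper's genuinely new step (its Lemma~\ref{lem7}), which bounds the \emph{sum} of $\log p\cdot\min_{j\le t'}\operatorname{ord}_p$ over all $t'$ in a window $[\tfrac{1+\varepsilon_0}{2}t,\,t]$ at once by $\tfrac{|J|}{2}\log t$, using the sorted gap lengths, Jensen's inequality for $y\mapsto y\log(ex/y)$, and the fact that removing one point destroys at most two gap intervals.

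Your heuristic for the appearance of $\log 2$ is in the right spirit — the constant does arise from a geometric cascade $T_k=((1+\varepsilon_0)/2)^kT$, giving $\sum_k\log T_k\sim\log^2T/(2\log 2)$ — but ``iterating the Wilson reflection across scales'' is not a substitute for this cascade, and I see no way to make it yield the claimed savings: the reflection pairs each $n$ with a single $m=p-n-1$ and gives no control on the $p$-adic orders, whereas the actual gain comes from the observation that many different truncations $t'$ of the list $n_1,\dots,n_t$ can be charged to \emph{distinct short} gap intervals whose total length is at most $|J_p|$. Also, your claim that $v_p(n!+f(n))=v_p(f(n))$ for all $p\le n$ needs the quantitative form $n\ge C_2p$ (for $p\le n<C_2p$ one instead uses that there are only $O(1)$ such $n$ per prime); and your parenthetical worry about counting solutions of $n!\,f(n)\equiv\pm1\pmod p$ via Weil-type estimates points at the wrong tool — the relevant counting input is the elementary $|J|^{2/3}$ bound above. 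As it stands, the proposal identifies the target and the shell of the argument but does not contain, even in outline, a provable version of the key lemma, so the proof is not complete.
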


We follow most parts of the arguments of Luca and Shparlinski \cite{LS05a} and Stewart \cite{Ste04}. The new ingredient in this paper is Lemma \ref{lem7}. Roughly speaking, for a prime number $p$ and distinct integers $n_1,n_2,\ldots,n_t<p$, we are able to bound from above
\[ \log p \sum_{ct \leqslant t' \leqslant t} \min_{1 \leqslant k \leqslant t'} \operatorname{ord}_{p}(n_{k}!+f(n_k)),  \]
for some constant $c>1/2$. Here $\operatorname{ord}_p(m)$ is the $p$-adic order of an integer $m$. This upper bound gains a multiplicative constant comparing with bounding $(\log p)\min_{1 \leqslant k \leqslant t'} \operatorname{ord}_{p}(n_{k}!+f(n_k))$ individually for each $t'$ as what is done in \cite{LS05a,Ste04}.

Our new ingredient can be applied to some other related problems. For example, in \cite{Ste04}, Stewart proved that
\[ \liminf_{n \rightarrow + \infty \atop n \text{~odd}} \frac{p(n!+1)}{n} \leqslant \frac{\sqrt{145}-1}{8}, \]
where $p(m)$ denotes the least prime divisor of an integer $m > 1$. By inserting our new ingredient, the constant $(\sqrt{145}-1)/8 \approx 1.380$ can be improved to
\[ c = \frac{\sqrt{81(\log 2)^2 + 16} - 9\log 2 + 4}{4} \approx 1.293. \]
(But unlike Theorem \ref{thm1}, our methods cannot prove the lower asymptotic density of the set $\{n: p(n!+1)<(c-\varepsilon_0)n\}$ is positive). In \cite{LS05b}, Luca and Shparlinski showed that
\[ \limsup_{n \rightarrow +\infty} \frac{P(n!+2^n-1)}{n} \geqslant \frac{2\pi^2+3}{18}. \]
The constant $(2\pi^2+3)/18 \approx 1.263$ can be improved to
\[ 1+\frac{2\pi^2-15}{6}\log\frac{3}{2} \approx 1.320. \]

\bigskip

\section{Preliminary lemmas and notations}\label{Sect2}

\begin{lemma}\label{lem1}
Let $f(X) \in \mathbb{Z}[X] \setminus \{ 0 \}$. Then there exists a positive integer $n_0 \geqslant 2$, depending only on $f$, such that the equation
\[ f(n) \prod_{i=1}^{k} (n+i) - f(n+k) = 0 \]
does not have any integer solutions $(n,k)$ with $n \geqslant n_0$ and $k \geqslant 1$.

Moreover, for any integer $n \geqslant n_0$, it holds that $n!+f(n) > 1$ and $f(n) \neq 0$.
\end{lemma}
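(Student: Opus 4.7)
The plan is to show that for $n$ large and any $k \geqslant 1$, the absolute value of $f(n)\prod_{i=1}^{k}(n+i)$ strictly exceeds $|f(n+k)|$, which already rules out the claimed equation. Write $d = \deg f$. If $d = 0$ then $f$ is a nonzero constant $c$ and the equation becomes $c\bigl((n+1)(n+2)\cdots(n+k) - 1\bigr) = 0$; since the product is at least $2$ whenever $n \geqslant 1$ and $k \geqslant 1$, this case is done immediately.

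Suppose now $d \geqslant 1$. Elementary polynomial estimates yield constants $0 < c_1 \leqslant c_2$ and $N_1$ with $c_1 m^d \leqslant |f(m)| \leqslant c_2 m^d$ for every integer $m \geqslant N_1$. Using $\prod_{i=1}^{k}(n+i) \geqslant (n+1)^k$, it suffices to prove
\[
c_1 n^d (n+1)^k > c_2 (n+k)^d \qquad (n \geqslant n_0,\ k \geqslant 1).
\]
I would split on the size of $k$ relative to $n$. For $1 \leqslant k \leqslant n$, the bound $(n+k)^d \leqslant 2^d n^d$ reduces the inequality to $(n+1)^k > (c_2/c_1)\,2^d$, which holds uniformly in $k$ once $n$ exceeds a constant depending only on $d$ and $c_2/c_1$. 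For $k > n \geqslant 2$, using $n+k < 2k$ and $(n+1)^k \geqslant 2^k$, it suffices to check $2^k n^d > (c_2/c_1)(2k)^d$, which holds as soon as $k \geqslant K_0(d, c_2/c_1)$ since $2^k/k^d \to \infty$; the leftover range $k \leqslant K_0$ combined with $k > n$ forces $n < K_0$, so it is excluded by insisting $n_0 \geqslant K_0$. Taking $n_0$ to be the maximum of all the thresholds produced above gives the desired uniform bound.

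For the moreover part, since $f$ is a nonzero polynomial it has finitely many real roots, so enlarging $n_0$ past all of them ensures $f(n) \neq 0$ for $n \geqslant n_0$. The inequality $n!+f(n) > 1$ then follows from $|f(n)| \leqslant c_2 n^d$ together with the super-polynomial growth of $n!$, which gives $n! > c_2 n^d + 2$ for all sufficiently large $n$. The only step requiring real care is the regime $k > n$ with $k$ large, where the exponential $2^k$ must be played off against the polynomial $k^d$ and one has to be careful not to let the ``small $k$'' sub-case leak back into ``small $n$''; everything else reduces to routine comparison of magnitudes and the finiteness of the zero set of $f$.
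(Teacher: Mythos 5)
Your argument is correct. The paper itself gives no proof here—it simply cites Lemma~3 of [LS05a] for the first assertion and calls the last sentence trivial—so your write-up supplies a self-contained elementary verification. The structure is the natural one: bound $|f(m)|$ between $c_1 m^d$ and $c_2 m^d$ for large $m$, reduce to $c_1 n^d (n+1)^k > c_2 (n+k)^d$, and split into $k \leqslant n$ (where $(n+k)^d \leqslant 2^d n^d$ and $(n+1)^k \geqslant n+1$ settle it for $n$ large) and $k > n$ (where $2^k$ beats $(2k)^d$ for $k \geqslant K_0$, and the remaining range forces $n < K_0$, absorbed into $n_0$). The degree-zero case and the ``moreover'' statements are handled correctly as well. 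No gaps.
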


\begin{proof}
If we delete the last sentence, this is Lemma 3 of \cite{LS05a}. But adding the last sentence is trivial.
\end{proof}

\begin{lemma}\label{lem2}
Let $f(X) \in \mathbb{Z}[X] \setminus \{ 0 \}$. Then there exists a positive constant $C_0$ depending only on $f$, such that for any prime number $p$ and any interval $J \subset [1,p)$ with length $|J| \geqslant 1$, it holds that
\[ \# \left\{ n \in \mathbb{N}:~n \in J,~p \mid (n!+f(n))  \right\} \leqslant C_0|J|^{\frac{2}{3}}. \]
\end{lemma}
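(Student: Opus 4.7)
The plan is to reduce the condition $p \mid n! + f(n)$ to a polynomial congruence modulo $p$ and then count via a two-scale combinatorial argument. Fix a prime $p$ and an interval $J \subset [1,p)$ of length $L := |J|$, and set $S := \{n \in \mathbb{N} \cap J : p \mid n!+f(n)\}$. If $n_1 < n_2$ both lie in $S$, then since $n_i < p$ one has $n_i! \equiv -f(n_i) \pmod p$ with $n_i!$ invertible; dividing these congruences gives
\[ f(n_1) \prod_{i=1}^{k} (n_1 + i) \equiv f(n_1 + k) \pmod p, \qquad k := n_2 - n_1. \]
Thus $n_1$ is a root mod $p$ of the polynomial
\[ g_k(X) := f(X) \prod_{i=1}^{k} (X+i) - f(X+k) \in \mathbb{Z}[X]. \]
Comparing top-degree terms, $g_k$ has degree $d+k$ and leading coefficient $a_d$, where $d := \deg f$ and $a_d$ is the leading coefficient of $f$. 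Hence, for any prime $p$ not dividing $a_d$, $g_k \bmod p$ is a nonzero polynomial of degree $d+k$ with at most $d+k$ roots in $\mathbb{F}_p$. Consequently, for each $k \geq 1$,
\[ M_k := \#\{n \in S : n+k \in S\} \leq d+k. \]

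For the combinatorial step, choose a parameter $K \geq 1$ and form the graph on $S$ whose edges are pairs of elements at distance at most $K$. The total edge count is $\sum_{k=1}^{K} M_k \leq dK + K^2$, so at most $2(dK + K^2)$ elements of $S$ are incident to an edge; the remaining set $T$ of isolated vertices has pairwise separations greater than $K$ inside the interval $J$, giving $|T| \leq L/K + 1$. Therefore
\[ |S| \;\leq\; |T| + 2(dK+K^2) \;\leq\; \frac{L}{K} + 2(dK+K^2) + 1, \]
and taking $K := \lceil L^{1/3} \rceil$ balances the $L/K$ and $K^2$ terms, yielding $|S| \leq C_1 L^{2/3}$ for a constant $C_1 = C_1(f)$. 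For the finitely many primes $p$ dividing $a_d$, one has $L < p \leq |a_d|$, so $|S| \leq L \leq |a_d| \cdot L^{2/3}$ trivially; setting $C_0 := \max(C_1, |a_d|)$ then completes the proof.

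I do not expect a serious obstacle. The only subtle point is the top-degree comparison showing $g_k \not\equiv 0 \pmod p$, and this is immediate because $f(X+k)$ has degree $d$, strictly less than the degree $d+k$ of $f(X)\prod_{i=1}^{k}(X+i)$, so the leading coefficient $a_d$ survives the subtraction. Note that Lemma \ref{lem1} is not needed here, since it asserts nonvanishing of $g_k(n)$ for specific integers $n$, which is strictly stronger than nonvanishing as a polynomial. The combinatorial balancing $K \sim L^{1/3}$ is a standard technique for point sets constrained by "few close pairs" and is exactly what forces the exponent $2/3$.
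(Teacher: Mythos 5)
Your proof is correct, and it is essentially the argument of Lemma 4 of \cite{LS05a}, which is all the paper gives for this statement (the paper proves Lemma \ref{lem2} by citation only): reduce $p \mid n!+f(n)$ for two elements $n, n+k$ to the root count $\deg f + k$ of $f(X)\prod_{i=1}^{k}(X+i)-f(X+k)$ modulo $p$, then balance the resulting ``few close pairs'' bound against the spacing bound at $K \sim |J|^{1/3}$. Your handling of the finitely many primes dividing the leading coefficient, and your observation that Lemma \ref{lem1} is not needed here, are both sound.
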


\begin{proof}
See Lemma 4 of \cite{LS05a}.
\end{proof}

From now on, we fix a polynomial $f(X) \in \mathbb{Z}[X]\setminus \{ 0 \}$ and let $n_0$ be the constant depending only on $f$ in Lemma \ref{lem1}. We fix an $\varepsilon_0 \in (0,1/100)$.

We use the Vinogradov symbols $\ll$ as well as the Landau symbols $O$ and $o$. Throughout this paper, all the constants implied by $\ll$ or $O$ depend at most on $f$ and $\varepsilon_0$ (in particular, these implied constants do not depend on $x$, a parameter we introduce later). It is more convenient to specify some constants, one of them is $C_1$: we fix a non-negative integer $C_1$, such that
\begin{equation}\label{C1}
|f(n)| \leqslant n^{C_1}, \text{~for any integer~} n \geqslant 2.
\end{equation}

Let $\lambda = 1+9\log 2 - 100\varepsilon_0$. To prove Theorem \ref{thm1}, we will show that the set
\[ B(\lambda) = \{ n \in \mathbb{N}:~ n \geqslant n_0 \text{~and~} P(n!+f(n)) > \lambda n \} \]
has lower asymptotic density $\geqslant \varepsilon_0$.

In fact, we shall show that for all sufficiently large integers $x$, it holds that $|B(\lambda) \cap \{1,2,\ldots,x\}| \geqslant \varepsilon_0 x$. We argue by contradiction, in the following we suppose that $|B(\lambda) \cap \{1,2,\ldots,x\}| < \varepsilon_0 x$. Let
\begin{equation}\label{Z}
Z = \prod_{n=\lceil \varepsilon_0 x \rceil \atop n \notin B(\lambda)}^{x} (n!+f(n)).
\end{equation}
($x$ is large enough so that $\varepsilon_0 x > n_0$. For $n \geqslant \varepsilon_0 x$, we have $n!+f(n)>1$ and $f(n) \neq 0$.) By Stirling's formula, $\log(n!+f(n)) = n\log n +O(n)$, we have
\[ \log Z > \left( \sum_{n = \lceil \varepsilon_0 x \rceil}^{x } \left( n\log n + O(n) \right) \right) - \varepsilon_0 x \left( x\log x + O(x) \right), \]
so for sufficiently large $x$,
\begin{equation}\label{logZ1}
\log Z > \left( \frac{1}{2} - 2\varepsilon_0 \right) x^2 \log x.
\end{equation}

We denote by $|I|$ the length of an interval $I$. The following notation will be frequently used in this paper.

\begin{definition}\label{Def}
Let $n_1,n_2,\ldots,n_t$ be distinct positive integers ($t \geqslant 2$). We define $\mathcal{I}(n_1,n_2,\ldots,n_t)$ to be a set of intervals as follows. Let $\sigma$ be the unique permutation on the index set $\{1,2,\ldots,t\}$ such that $n_{\sigma(1)} < n_{\sigma(2)} < \cdots < n_{\sigma(t)}$, then
\[ \mathcal{I}(n_1,n_2,\ldots,n_t) = \left\{ I:~ I = (n_{\sigma(i)},n_{\sigma(i+1)}] \text{~for some~} i \in \{ 1,2,\ldots,t-1 \} \right\}. \]

We also define
\begin{align*}
&\mathcal{I}_{\text{good}}(n_1,n_2,\ldots,n_t) \\
= &\left\{ I \in \mathcal{I}(n_1,n_2,\ldots,n_t): \text{~either~} |I| \leqslant \frac{x^{0.99}}{t} \text{~or~} I \text{~contains at least $2C_1+1$ prime numbers} \right\}.
\end{align*}

Note that $\mathcal{I}(n_1,n_2,\ldots,n_t)$ is determined by the set $\{ n_1,n_2,\ldots,n_t \}$, but $\mathcal{I}_{\text{good}}(n_1,n_2,\ldots,n_t)$ depends on $t,x$ and the constant $C_1$ from \eqref{C1}.
\end{definition}

Notice that $\# \mathcal{I}(n_1,n_2,\ldots,n_t) = t-1$  and the intervals in $\mathcal{I}(n_1,n_2,\ldots,n_t)$ are disjoint. It is plainly true that
\[ \# \left( \mathcal{I}(n_1,n_2,\ldots,n_t) \setminus \mathcal{I}(n_1,n_2,\ldots,n_{t-1}) \right) \leqslant 2. \]

The following lemma is a variation of the key observation in \cite{Ste04}.

\begin{lemma}\label{lem4}
Suppose that $x$ is larger than some constant depends at most on $f$ and $\varepsilon_0$. Let $t$, $t'$ be integers such that $2 \leqslant t' \leqslant t$. Let $p$ be a prime number and $n_1,n_2,\ldots,n_t$ be distinct positive integers in the interval $[\varepsilon_0 x,\min\{p,x\})$. Then for any two distinct intervals $I_1, I_2 \in \mathcal{I}(n_1,n_2,\ldots,n_{t'}) \cap \mathcal{I}_{\text{good}}(n_1,n_2,\ldots,n_t)$ with $|I_2| \geqslant |I_1|$, we have
\begin{equation}\label{ineq1}
(\log p) \min_{1 \leqslant k \leqslant t'}\{\operatorname{ord}_{p}(n_k! + f(n_k))\} \leqslant  |I_1| \log \left( \frac{ex}{|I_1|} \right) + (|I_2|-|I_1| + \frac{x^{0.99}}{t} + O(1))\log x.
\end{equation}
\end{lemma}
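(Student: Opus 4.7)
The plan is to combine Stewart's classical telescoping observation for factorials with a new divisibility trick exploiting the factor $|I_1|!$ inside products of consecutive integers. Let $m := \min_{1\le k\le t'}\operatorname{ord}_p(n_k!+f(n_k))$; we must bound $m\log p$. For any interval $I=(a,b]\in\mathcal I(n_1,\ldots,n_{t'})$, both endpoints lie in $\{n_k\}$, so $p^m$ divides both $a!+f(a)$ and $b!+f(b)$; using $b!=a!\cdot\prod_{j=a+1}^b j$, this yields
\[
p^m\ \Big|\ f(a)\,A_I - f(b),\qquad A_I:=\prod_{j=a+1}^{b}j,
\]
a nonzero integer by Lemma~\ref{lem1}. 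The trivial size bound $|f(a)A_I-f(b)|\le 2x^{C_1+|I|}$ gives the crude estimate $m\log p \le |I|\log x + O(\log x)$. Applied to $I_1$, this already proves the lemma whenever $|I_1|\le x^{0.99}/t$ (the loss being absorbed in the $(x^{0.99}/t + O(1))\log x$ slack), so I may assume $|I_1|>x^{0.99}/t$, and then by the $\mathcal I_{\mathrm{good}}$ hypothesis $I_1$ contains at least $2C_1+1$ primes.

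The new idea is that whenever $|I|\ge |I_1|$, the product $A_I$ of $|I|$ consecutive integers is divisible by $|I_1|!$, so one may write
\[
A_{I_1}=|I_1|!\binom{b_1}{|I_1|},\qquad A_{I_2}=|I_1|!\binom{a_2+|I_1|}{|I_1|}A_2'',\qquad A_2'':=\!\!\prod_{a_2+|I_1|<j\le b_2}\!\!j\le x^{|I_2|-|I_1|}.
\]
Substituting into the two divisibilities for $I_1$ and $I_2$, multiplying the first by $f(b_2)$ and the second by $f(b_1)$, and subtracting, the common term $f(b_1)f(b_2)$ cancels to yield
\[
p^m\ \Big|\ |I_1|!\cdot X,\qquad X:=f(a_1)f(b_2)\binom{b_1}{|I_1|}-f(a_2)f(b_1)\binom{a_2+|I_1|}{|I_1|}A_2''.
\]
Since $|I_1|<x<p$, $\gcd(|I_1|!,p)=1$, hence $p^m\mid X$. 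Using $\binom{y}{|I_1|}\le(ey/|I_1|)^{|I_1|}$ and $|f|\le x^{C_1}$ gives $|X|\le 2x^{2C_1}(ex/|I_1|)^{|I_1|}x^{|I_2|-|I_1|}$; taking logarithms delivers the desired bound, with the $2C_1\log x$ absorbed into the $O(\log x)$ error.

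The main obstacle will be to show $X\ne 0$, since otherwise the size bound on $|X|$ is vacuous. I would reduce $X$ modulo a prime $q\in I_1$ satisfying $q>|I_1|$: such a $q$ divides $\binom{b_1}{|I_1|}$, so $X\equiv -f(a_2)f(b_1)\binom{a_2+|I_1|}{|I_1|}A_2''\pmod q$. Vanishing of $X$ would then force $q\mid f(a_2)f(b_1)A_{I_2}/|I_1|!$ for \emph{every} such prime $q\in I_1$. A counting argument shows that at most $O(C_1)$ primes of $I_1$ can divide $f(a_2)f(b_1)$ (as $|f(a_2)f(b_1)|\le x^{2C_1}$ admits few prime factors exceeding $\varepsilon_0 x$), and since each element of $I_2$ has at most one prime factor $\ge\varepsilon_0 x$ (because $(\varepsilon_0 x)^2>x$ for large $x$), the primes of $I_1$ appearing in $A_{I_2}$ are similarly controlled. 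The hypothesis that $I_1$ contains at least $2C_1+1$ primes then produces a prime $q$ untouched by any of these obstructions, forcing $X\ne 0$ and completing the proof.
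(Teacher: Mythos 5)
Your overall route is the same as the paper's: the crude bound disposes of $|I_1|\leqslant x^{0.99}/t$, the cross-combination $f(a_1)f(b_2)A_{I_1}-f(a_2)f(b_1)A_{I_2}=|I_1|!\,X$ is exactly the paper's quantity \eqref{divi3} divided by $(n_{j_1}-n_{i_1})!$, the coprimality of $|I_1|!$ with $p$ and the size estimate $|X|\leqslant 2x^{2C_1}(ex/|I_1|)^{|I_1|}x^{|I_2|-|I_1|}$ are identical. The genuine gap is in your proof that $X\neq 0$. A secondary, fixable issue first: goodness only guarantees $2C_1+1$ primes in $I_1$, each exceeding $a_1\geqslant\varepsilon_0x$; since $|I_1|$ may be as large as $(1-\varepsilon_0)x$, nothing in the hypothesis ensures any of them exceeds $|I_1|$, which you need for $q\mid\binom{b_1}{|I_1|}$. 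The serious problem is your counting step in the case where $I_1$ lies to the \emph{left} of $I_2$. A prime $q\in I_1$ divides $A_{I_2}$ as soon as some multiple $cq$ with $2\leqslant c\leqslant 1/\varepsilon_0$ lands in $I_2$, and the observation that each element of $I_2$ has at most one prime factor $\geqslant\varepsilon_0x$ only bounds the number of such primes by $|I_2|$, not by $O(C_1)$. Concretely, for $I_1=(N,N+H]$ and $I_2=(2N,2N+2H]$ \emph{every} prime of $I_1$ divides $A_{I_2}$ (via its doubled multiple), so no prime of $I_1$ is ``untouched by the obstructions'' and your intended contradiction never materializes.

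The repair is what the paper does: work with the undivided quantity and take the $2C_1+1$ primes from whichever of the two disjoint intervals lies to the \emph{right}. Each such prime exceeds every integer in the left interval and therefore cannot divide the product of consecutive integers over the left interval; if the difference vanished, the product of these $2C_1+1$ primes, at least $(\varepsilon_0x)^{2C_1+1}$, would have to divide the accompanying product of two polynomial values, of absolute value at most $x^{2C_1}$, a contradiction for large $x$. Note that this uses that \emph{both} intervals contain $2C_1+1$ primes, which your reduction does supply (since $|I_2|\geqslant|I_1|>x^{0.99}/t$ and both intervals are good), but which your argument never exploits because you only ever reduce modulo primes of $I_1$.
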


\begin{proof}
There exist indices $i_1,j_1,i_2,j_2 \leqslant t'$ such that $I_1 = (n_{i_1},n_{j_1}], I_2= (n_{i_2},n_{j_2}]$. we denote
\[ D = p^{\min_{1 \leqslant k \leqslant t'}\{\operatorname{ord}_{p}(n_k! + f(n_k))\}}.\]
From $D \mid (n_{i_1}!+f(n_{i_1}))$ and $D \mid (n_{j_1}!+f(n_{j_1}))$ we obtain that
\begin{equation}\label{divi1}
D ~\Bigg|~ \left( f(n_{i_1})\prod_{k=1}^{n_{j_1}-n_{i_1}}(n_{i_1}+k) - f(n_{j_1}) \right).
\end{equation}
Since $n_{j_1}>n_{i_1} \geqslant \varepsilon_0x > n_0$, by Lemma \ref{lem1}, we have
\[ f(n_{i_1})\prod_{k=1}^{n_{j_1}-n_{i_1}}(n_{i_1}+k) - f(n_{j_1}) \neq 0. \]
Hence, we deduce from \eqref{divi1} and \eqref{C1} that
\[ D \leqslant \left| f(n_{i_1})\prod_{k=1}^{n_{j_1}-n_{i_1}}(n_{i_1}+k) - f(n_{j_1}) \right| \leqslant 2x^{C_1+|I_1|}. \]
If $|I_1| \leqslant x^{0.99}/t$, then $\log D \leqslant (x^{0.99}/t + O(1))\log x$, thus \eqref{ineq1} is proved. Similarly, we have
\begin{equation}\label{divi2}
D ~\Bigg|~ \left( f(n_{i_2})\prod_{k=1}^{n_{j_2}-n_{i_2}}(n_{i_2}+k) - f(n_{j_2}) \right) \neq 0.
\end{equation}
If $|I_2| \leqslant x^{0.99}/t$, then \eqref{ineq1} is also proved.

It remains to consider the case that both $|I_1|$ and $|I_2|$ are greater than $x^{0.99}/t$. By the definition of $\mathcal{I}_{\text{good}}(n_1,n_2,\ldots,n_t)$, there exist distinct primes $q_1,q_2,\ldots,q_{2C_1+1}$ in the interval $I_1$ and there exist distinct primes $q_1^{\prime},q_2^{\prime},\ldots,q_{2C_1+1}^{\prime}$ in the interval $I_2$. By \eqref{divi1} and \eqref{divi2}, we have
\begin{equation}\label{divi3}
D ~\Bigg|~ \left( f(n_{i_1})f(n_{j_2})\prod_{k=1}^{n_{j_1}-n_{i_1}}(n_{i_1}+k) -  f(n_{i_2})f(n_{j_1})\prod_{k'=1}^{n_{j_2}-n_{i_2}}(n_{i_2}+k') \right).
\end{equation}
We claim that
\begin{equation}\label{ffP-ffP}
f(n_{i_1})f(n_{j_2})\prod_{k=1}^{n_{j_1}-n_{i_1}}(n_{i_1}+k) -  f(n_{i_2})f(n_{j_1})\prod_{k'=1}^{n_{j_2}-n_{i_2}}(n_{i_2}+k') \neq 0.
\end{equation}
In fact, since $I_1 = (n_{i_1},n_{j_1}]$ and $I_2= (n_{i_2},n_{j_2}]$ are disjoint, there are two cases. The first case is that $n_{i_1}<n_{j_1} \leqslant n_{i_2}<n_{j_2}$. If \eqref{ffP-ffP} does not hold, then we deduce that $q_1^{\prime}q_2^{\prime} \cdots q_{2C_1+1}^{\prime} \mid f(n_{i_1})f(n_{j_2})$. Since $n_{i_1},n_{j_1},n_{i_2},n_{j_2} \geqslant \varepsilon_0 x > n_0$, we have $f(n_{i_1})f(n_{j_2}) \neq 0$. Thus, $q_1^{\prime}q_2^{\prime} \cdots q_{2C_1+1}^{\prime} \leqslant |f(n_{i_1})f(n_{j_2})| \leqslant x^{2C_1}$. But $q_1^{\prime}q_2^{\prime} \cdots q_{2C_1+1}^{\prime} \geqslant (\varepsilon_0 x)^{2C_1+1}$, this is a contradiction because $x$ is large. The second case is that $n_{i_2}<n_{j_2} \leqslant n_{i_1}<n_{j_1}$, if \eqref{ffP-ffP} does not hold, we will get a similar contradiction from $q_1q_2 \cdots q_{2C_1+1} \mid f(n_{i_2})f(n_{j_1})$. Hence, the claim \eqref{ffP-ffP} holds.

Since $|I_2| = n_{j_2} - n_{i_2} \geqslant |I_1| = n_{j_1}-n_{i_1}$, the factorial $(n_{j_1}-n_{i_1})!$ is a divisor of the right hand side of \eqref{divi3}. Since $D$ is a power of the prime $p$, and $n_{i_1},n_{j_1},n_{i_2},n_{j_2} < p$, the factorial $(n_{j_1}-n_{i_1})!$ is coprime to $D$. Thus, we have
\begin{equation}\label{divi4}
D ~\Bigg|~ \frac{1}{(n_{j_1}-n_{i_1})!} \left( f(n_{i_1})f(n_{j_2})\prod_{k=1}^{n_{j_1}-n_{i_1}}(n_{i_1}+k) -  f(n_{i_2})f(n_{j_1})\prod_{k'=1}^{n_{j_2}-n_{i_2}}(n_{i_2}+k') \right).
\end{equation}
Note that $m! \geqslant (m/e)^m$ for any positive integer $m$ and
\[ \left| f(n_{i_1})f(n_{j_2})\prod_{k=1}^{n_{j_1}-n_{i_1}}(n_{i_1}+k) -  f(n_{i_2})f(n_{j_1})\prod_{k'=1}^{n_{j_2}-n_{i_2}}(n_{i_2}+k') \right| \leqslant 2x^{2C_1 + |I_2|}, \]
we deduce from \eqref{divi4} and \eqref{ffP-ffP} that
\[ D \leqslant 2x^{|I_2|-|I_1|+2C_1} \cdot \left( \frac{ex}{|I_1|} \right)^{|I_1|}, \]
which completes the proof of \eqref{ineq1}.
\end{proof}

From the proof of Lemma \ref{lem4} we can see that, we require the intervals in $\mathcal{I}_{\text{good}}$ to contain at least $2C_1+1$ prime numbers only for the purpose to show some quantity is nonzero. The following result of Heath-Brown will help us to show that most intervals in $\mathcal{I}$ are ``good'' (when $t \leqslant C_0x^{2/3}$), and this is the only non-elementary result we need in this paper.

\begin{lemma}[Heath-Brown~\cite{Hea79}]\label{lem5}
Let $2=p_1<p_2<\ldots$ denote the sequence of prime numbers. For any $\varepsilon > 0$, there exists a constant $C_{\varepsilon}$ depending only on $\varepsilon$, such that for any $y \geqslant 2$ we have
\[ \sum_{p_k \leqslant y} (p_{k+1}-p_k)^2 \leqslant C_{\varepsilon} y^{\frac{23}{18}+\varepsilon}. \]
\end{lemma}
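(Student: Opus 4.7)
The plan is to reduce the sum of squares of prime gaps to a statement about the frequency of large gaps, then bound the latter using estimates for primes in short intervals. Writing $d_n = p_{n+1} - p_n$ and using $d_n^2 = 2\int_0^{d_n} H\,dH$ together with Fubini, I would start from the identity
\[ \sum_{p_n \leqslant y} d_n^2 = 2 \int_0^{y} H \cdot N(y,H)\, dH + O(y), \]
where $N(y,H) := \#\{n : p_n \leqslant y,\ d_n \geqslant H\}$ counts prime gaps below $y$ of length at least $H$. The problem becomes: prove $\int_0^y H \cdot N(y,H)\, dH \ll_\varepsilon y^{23/18+\varepsilon}$.

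For the small range $H \leqslant T$ (with $T$ to be optimized), the trivial bound $N(y,H) \leqslant y/H + 1$, which follows from $\sum_{p_n\leqslant y} d_n \leqslant y$, gives a contribution of $O(yT + T^2)$. For the large range $H > T$, the key input is a mean-square estimate for primes in short intervals of the form
\[ \int_{y/2}^{y} \left| \pi(x+H) - \pi(x) - \frac{H}{\log x} \right|^2 dx \ll_\varepsilon y^{\varepsilon} \, y \, H^{2-\delta}, \]
valid in a suitable range of $H$. Any prime-free interval $(x_0,x_0+H]$ forces the integrand to be $\asymp H^2/(\log y)^2$ on a subinterval of length $\asymp H$ around $x_0$, so such a bound yields $N(y,H) \ll_\varepsilon y^{\varepsilon} y H^{-1-\delta}$. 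Substituting and balancing the two regimes at $T \asymp y^{5/18}$ produces the exponent $23/18 + \varepsilon$.

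The main obstacle, and the reason this lemma is genuinely nonelementary, is establishing the mean-square estimate with a strong enough exponent $\delta$. This step rests on Heath-Brown's zero-density estimate $N(\sigma,T) \ll_\varepsilon T^{3(1-\sigma)/(2-\sigma)+\varepsilon}$ for $\zeta(s)$, combined with a truncated explicit formula and Parseval's identity for Dirichlet polynomials to convert a mean value over $x$ into a mean value over zeros. I would not attempt to reprove that analytic input; instead, the plan is to invoke Heath-Brown's 1979 theorem directly and carry out only the elementary integration-by-parts reduction above. The exponent $23/18$ is specifically the balance point dictated by the $7/12$ exponent for primes in short intervals coming from that zero-density estimate, so any improvement at this stage would need an unconditional sharpening of zero-density bounds near $\sigma = 1$.
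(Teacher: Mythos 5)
The paper's ``proof'' of this lemma is a bare citation---the statement is exactly Theorem~1 of Heath-Brown \cite{Hea79}---and your proposal ultimately defers to the same theorem, so the approach is essentially the same. The preliminary sketch of how one would reduce $\sum (p_{k+1}-p_k)^2$ to a mean-square count of primes in short intervals is extra exposition of what happens inside Heath-Brown's paper rather than something needed here (and, as a minor point, the density estimate $N(\sigma,T)\ll T^{3(1-\sigma)/(2-\sigma)+\varepsilon}$ you name is Ingham's, while the $7/12$ short-interval exponent comes from Huxley's $N(\sigma,T)\ll T^{12(1-\sigma)/5+\varepsilon}$).
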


\begin{proof}
This is Theorem 1 of \cite{Hea79}.
\end{proof}

\begin{corollary}\label{cor6}
Suppose that $x$ is larger than some constant depends at most on $f$ and $\varepsilon_0$. Let $t$ be an integer such that $2 \leqslant t \leqslant C_0 x^{2/3}$, where $C_0$ is the constant in Lemma \ref{lem2}. Let $n_1,n_2,\ldots,n_t$ be any distinct integers in the interval $[\varepsilon_0 x, x]$. Then we have
\begin{equation}\label{most_are_good}
\# \mathcal{I}_{\text{good}}(n_1,n_2,\ldots,n_t) \geqslant t-2t^{0.99}.
\end{equation}
\end{corollary}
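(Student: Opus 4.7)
The plan is to bound the number $b$ of \emph{bad} intervals in $\mathcal{I}(n_1,\ldots,n_t)\setminus \mathcal{I}_{\text{good}}(n_1,\ldots,n_t)$; since $\#\mathcal{I}(n_1,\ldots,n_t) = t-1$, it suffices to prove $b \leqslant t^{0.99}$, whence $\#\mathcal{I}_{\text{good}} \geqslant t - 1 - t^{0.99} \geqslant t - 2t^{0.99}$ (using $t^{0.99}\geqslant 1$). A bad interval $I_j$ has length $L_j > x^{0.99}/t$ and contains at most $2C_1$ primes, so the primes cut $I_j$ into at most $2C_1+1$ consecutive-prime-gap slices of total length $L_j$. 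Pigeonhole therefore produces consecutive primes $p_{k_j}<p_{k_j+1}$ with
\[
|I_j \cap (p_{k_j}, p_{k_j+1})| \;\geqslant\; \frac{L_j}{2C_1+1} \;>\; A, \qquad A := \frac{x^{0.99}}{(2C_1+1)\,t},
\]
and in particular the prime gap $g_{k_j}:=p_{k_j+1}-p_{k_j}$ exceeds $A$.

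I would then count these assignments with multiplicity. Distinct bad intervals are disjoint, so if two bad intervals both choose the same gap $(p_k,p_{k+1})$, they contribute disjoint subintervals of length $>A$ inside it, whence at most $g_k/A$ bad intervals can be assigned to a single index $k$. Using $g_k/A \leqslant g_k^2/A^2$ whenever $g_k>A$ and then invoking Heath-Brown (Lemma \ref{lem5}) with $y=2x$,
\[
b \;\leqslant\; \sum_{\substack{p_k\leqslant 2x\\ g_k > A}}\frac{g_k}{A} \;\leqslant\; \frac{1}{A^2}\sum_{p_k\leqslant 2x}g_k^2 \;\ll_{\varepsilon}\; A^{-2}\,x^{23/18+\varepsilon}.
\]
Substituting $A = x^{0.99}/((2C_1+1)t)$ yields $b \ll_{\varepsilon,f} t^2\,x^{23/18 - 1.98 + \varepsilon}$, and numerically $23/18 - 1.98 = -0.7022\ldots\,$.

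The final check is arithmetic. Writing $t^2 = t^{0.99}\cdot t^{1.01}$ and using the hypothesis $t\leqslant C_0 x^{2/3}$ gives $t^{1.01}\leqslant C_0^{1.01}\,x^{2.02/3} = C_0^{1.01}\,x^{0.6733\ldots}$, so
\[
b \;\ll_{\varepsilon,f}\; t^{0.99}\,x^{0.6734 - 0.7022 + \varepsilon} \;=\; t^{0.99}\,x^{-0.0288 + \varepsilon}.
\]
Taking, say, $\varepsilon = 0.01$ makes the $x$-exponent strictly negative, so for $x$ larger than a constant depending only on $f$ and $\varepsilon_0$ we indeed have $b \leqslant t^{0.99}$, completing the argument. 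The only nontrivial input is Heath-Brown's second-moment bound; the main (and only mildly delicate) obstacle is verifying the numerology, which succeeds precisely because Lemma \ref{lem2} pins $t\leqslant C_0 x^{2/3}$, safely below the critical threshold $x^{(0.7022\ldots)/1.01}\approx x^{0.695}$ beyond which the method would fail.
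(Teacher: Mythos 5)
Your proof is correct and follows essentially the same route as the paper: both arguments exploit that a bad interval is long yet contains few primes to locate large prime gaps, and both conclude via Heath-Brown's second-moment bound with the same numerology ($t\leqslant C_0x^{2/3}$ against the exponent $23/18$). The only cosmetic difference is that you pigeonhole each bad interval onto a single gap of length $>x^{0.99}/((2C_1+1)t)$ and count disjoint pieces within one gap, whereas the paper places the whole bad interval inside a block of $2C_1+1$ consecutive gaps and applies Cauchy--Schwarz to that block.
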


\begin{proof}
Since $x$ is large, we have $t \leqslant C_0x^{\frac{2}{3}} < x^{\frac{2}{3} + 0.01} < x$. By Lemma \ref{lem5}, there exists an absolute constant $C$ such that
\begin{equation}\label{SOS}
 \sum_{p_k \leqslant y} (p_{k+1}-p_k)^2 \leqslant Cy^{\frac{23}{18}+0.001}
\end{equation}
for any $y \geqslant 2$. (Recall we denote by $2=p_1<p_2<\ldots$ the sequence of prime numbers.)

For any interval $I \in \mathcal{I}(n_1,n_2,\ldots,n_t) \setminus \mathcal{I}_{\text{good}}(n_1,n_2,\ldots,n_t)$, we have $|I|> \frac{x^{0.99}}{t}$ and $I$ contains at most $2C_1$ prime numbers. There are indices $i,j$ such that $I = (n_i,n_j]$. Let $p_{k+1}$ be the least prime number such that $p_{k+1} > n_j$. Then $p_k \leqslant n_j \leqslant x$. On the other hand, $p_{k+1} > n_j \geqslant \varepsilon_0 x$, we have $k > 2C_1$ since $x$ is large. Since $I$ contains at most $2C_1$ prime numbers, we must have $p_{k-2C_1} \leqslant n_i$, so $I \subset (p_{k-2C_1},p_{k+1})$.

Let
\[ m = \# \left( \mathcal{I}(n_1,n_2,\ldots,n_t) \setminus \mathcal{I}_{\text{good}}(n_1,n_2,\ldots,n_t) \right). \]
In the above paragraph, we showed that for any interval $I \in \mathcal{I}(n_1,n_2,\ldots,n_t) \setminus \mathcal{I}_{\text{good}}(n_1,n_2,\ldots,n_t)$, there exists a prime number $p_k \leqslant x$ (with $k > 2C_1$) such that $I \subset (p_{k-2C_1},p_{k+1})$. Now, for any index $k$ such that $k>2C_1$ and $p_k \leqslant x$, we denote by $m_k$ the number of intervals in $\mathcal{I}(n_1,n_2,\ldots,n_t) \setminus \mathcal{I}_{\text{good}}(n_1,n_2,\ldots,n_t)$ that are contained in $(p_{k-2C_1},p_{k+1})$. Then
\[ \sum_{p_k \leqslant x \atop k > 2C_1} m_k \geqslant m \]
and
\begin{align}\label{m1}
\sum_{p_k \leqslant x \atop k > 2C_1} (p_{k+1} - p_{k-2C_1})^2 &\geqslant \sum_{p_k \leqslant x \atop k > 2C_1} m_k^2 \left(\frac{x^{0.99}}{t}\right)^2  \notag \\
&\geqslant \frac{x^{1.98}}{t^2} \sum_{p_k \leqslant x \atop k > 2C_1} m_k \notag \\
&\geqslant m \cdot \frac{x^{1.98}}{t^2}.
\end{align}
On the other hand, by \eqref{SOS} and the Cauchy-Schwarz inequality, we have
\begin{align}\label{m2}
&\sum_{p_k \leqslant x \atop k > 2C_1} (p_{k+1} - p_{k-2C_1})^2 \notag \\
\leqslant& (2C_1+1) \sum_{p_k \leqslant x \atop k > 2C_1} \left( (p_{k+1}-p_{k})^2 + (p_{k}-p_{k-1})^2 + \cdots + (p_{k-2C_1+1}-p_{k-2C_1})^2 \right) \notag \\
\leqslant& (2C_1+1)^2 \sum_{p_{k} \leqslant x} (p_{k+1}-p_{k})^2 \notag \\
\leqslant&  (2C_1+1)^2 \cdot C \cdot x^{\frac{23}{18}+0.001} < x^{\frac{23}{18}+0.01}. \quad \text{(Because $x$ is large.)}
\end{align}
Combining \eqref{m1} and \eqref{m2} we obtain that
\[ m \leqslant \frac{t^2}{x^{\frac{13}{18}-0.03}} < \frac{t}{x^{\frac{1}{18}-0.04}} < t^{0.99}. \]
(We used $t < x^{\frac{2}{3}+0.01}$ and $t<x$.)
Therefore,
\[ \# \mathcal{I}_{\text{good}}(n_1,n_2,\ldots,n_t) = t-1-m \geqslant t - 2t^{0.99}. \]
The proof of Corollary \ref{cor6} is complete.
\end{proof}

The following lemma is the key step of this paper.

\begin{lemma}\label{lem7}
Suppose that $x$ is larger than some constant depends at most on $f$ and $\varepsilon_0$. Let $p$ be a prime number. Let $t$ be an integer such that
\[ \left( \frac{10}{\varepsilon_0} \right)^{100} \leqslant t \leqslant C_0 x^{\frac{2}{3}}, \]
where $C_0$ is the constant in Lemma \ref{lem2}. Let $J$ be an interval such that
\[ J \subset [\varepsilon_0x, \min\{x,p\}). \]
If $n_1,n_2,\ldots,n_t$ are distinct integers in the interval $J$, then we have
\begin{equation}\label{ineq2}
\log p \sum_{t' = \left\lceil \left(\frac{1+\varepsilon_0}{2}\right)t \right\rceil}^{t} \min_{1 \leqslant j \leqslant t'} \left\{ \operatorname{ord}_{p} \left( n_{j}! + f(n_j) \right) \right\} \leqslant \frac{|J|}{2} \log t + \frac{x\log x}{t^{0.98}} + O(x).
\end{equation}
\end{lemma}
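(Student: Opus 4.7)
The plan is to apply Lemma~\ref{lem4} for each $t' \in [\tau, t]$ (with $\tau := \lceil (1+\varepsilon_0)t/2\rceil$), choose the pair $(I_1(t'), I_2(t'))$ carefully, and sum. Set $\mathcal{G} := \mathcal{I}_{\text{good}}(n_1,\ldots,n_t)$, so that $|\mathcal{G}| \geq t - 2t^{0.99}$ by Corollary~\ref{cor6}, and $A_{t'} := \mathcal{G} \cap \mathcal{I}(n_1,\ldots,n_{t'})$. Since each of $n_{t'+1},\ldots,n_t$ destroys at most one interval of $\mathcal{I}(n_1,\ldots,n_{t'})$, we obtain $|A_{t'}| \geq 2t'-t-1-2t^{0.99}$, which is at least $\tfrac{3}{4}\varepsilon_0 t \geq 2$ for $t'\geq\tau$ thanks to the hypothesis $t \geq (10/\varepsilon_0)^{100}$, so Lemma~\ref{lem4} is applicable at every $t'$.

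The combinatorial heart of the argument is the following choice: for each $t' \in [\tau,t]$ at which $n_{t'}$ splits an existing interval $(n_a,n_b] \in \mathcal{I}(n_1,\ldots,n_{t'-1})$ into two halves $(n_a,n_{t'}]$ and $(n_{t'},n_b]$ both lying in $\mathcal{G}$, I let $I_1(t')$ be the shorter of the two halves and $I_2(t')$ the longer. Since every interval $I \in \mathcal{G}$ has a unique creation time $\rho(I) \leq t$, these chosen $I_1(t'), I_2(t')$ form a pairwise-disjoint family inside $\mathcal{G}$, and $\sum_{t'}\bigl(|I_1(t')|+|I_2(t')|\bigr) \leq |J|$. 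The exceptional $t'$ (range extensions, or split cases in which one of the two halves fails to be good) number only $O(t^{0.99})$ by Corollary~\ref{cor6}; for these I pick $(I_1(t'), I_2(t'))$ as any valid pair from $A_{t'}$, contributing only $O(x)$ overall.

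With these choices, Lemma~\ref{lem4} gives, setting $E(t') := \min_{j\leq t'}\operatorname{ord}_p(n_j! + f(n_j))$,
\[
\log p \cdot E(t') \leq |I_1(t')|\log\tfrac{ex}{|I_1(t')|} + \bigl(|I_2(t')|-|I_1(t')|\bigr)\log x + O\!\bigl((x^{0.99}/t+1)\log x\bigr).
\]
Summed over $t'\in[\tau,t]$ (at most $t/2$ values), the $O(\cdot)$ contributions amount to $O(x^{0.99}\log x + t\log x) \leq \tfrac{x\log x}{t^{0.98}} + O(x)$, using $t \leq C_0 x^{2/3}$. In the balanced case $|I_1(t')|=|I_2(t')|=L(t')/2$, the main term for each $t'$ simplifies to $\tfrac{L(t')}{2}\log\tfrac{2ex}{L(t')}$; applying Jensen's inequality to the concave function $y \mapsto \tfrac{y}{2}\log(2ex/y)$ across the $\leq t/2$ summands with $\sum L(t') \leq |J|$ yields $\tfrac{|J|}{2}\log(ext/|J|) + O(|J|) = \tfrac{|J|}{2}\log t + O(x)$, the target bound.

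The main obstacle is the \emph{unbalanced} splits $|I_1(t')| \ll |I_2(t')|$, for which the summand $(|I_2|-|I_1|)\log x$ could individually reach order $L(t')\log x$, aggregating to $|J|\log x$ and far exceeding the target. I will resolve this by a dichotomy: whenever $|I_1(t')|\leq x^{0.99}/t$, the simpler first case of Lemma~\ref{lem4} already gives a bound of $O\bigl((x^{0.99}/t)\log x\bigr)$ that sits inside the error term; while when $|I_1(t')| > x^{0.99}/t$ but the split is still unbalanced, I will replace $I_2(t')$ by a different interval of $A_{t'}$ whose length is comparable to $|I_1(t')|$, exploiting the abundance $|A_{t'}|\geq \tfrac{3}{4}\varepsilon_0 t$ of alternative candidates. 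Carrying out this replacement while preserving the global length-budget $\sum_{t'}(|I_1(t')|+|I_2(t')|)\leq |J|$ is the technical core of the proof.
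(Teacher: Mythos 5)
There is a genuine gap --- in fact two. First, your count of the exceptional $t'$ is wrong. You claim that the $t'$ for which the ``two newly created halves both lie in $\mathcal{G}$'' picture fails number only $O(t^{0.99})$ by Corollary \ref{cor6}. But Corollary \ref{cor6} only bounds the number of \emph{final} intervals of $\mathcal{I}(n_1,\ldots,n_t)$ that fail to be good; it says nothing about range extensions (if the $n_j$ happen to be indexed in increasing order of value, \emph{every} insertion is a range extension and no splits occur at all), nor about halves that are created at time $t'$ but destroyed by a later $n_{t''}$ landing inside them (an alternating construction makes this happen for a positive proportion of $t'\in[\tau,t]$). So the exceptional set can have size $\Theta(t)$. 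Your fallback for exceptional $t'$ (take the two shortest intervals of $A_{t'}$, each of length $\ll |J|/t$) then contributes $\Theta(t)\cdot O\bigl((|J|/t)\log x\bigr)=\Theta(|J|\log x)$, which is not $O(x)$ and is not absorbed by $\tfrac{|J|}{2}\log t+\tfrac{x\log x}{t^{0.98}}$; since Lemma \ref{lem7} is invoked $\asymp\log x$ times per prime in the proof of Theorem \ref{thm1}, an error of order $x\log x$ per invocation would wash out the main term. Second, you explicitly leave the unbalanced-split case unresolved: your proposed fix (swap $I_2(t')$ for another interval of comparable length) destroys exactly the pairwise-disjointness on which the budget $\sum_{t'}(|I_1(t')|+|I_2(t')|)\leq|J|$ rests, and you acknowledge that reconciling the two is ``the technical core'' without carrying it out. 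As it stands the argument does not close.

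For comparison, the paper sidesteps both issues by never tracking creation times. For $t'=t-k$ it takes the $\lceil t^{0.99}\rceil+1$ \emph{shortest} intervals of $\mathcal{I}(n_1,\ldots,n_{t-k})\cap\mathcal{I}_{\mathrm{good}}(n_1,\ldots,n_t)$ and averages the bound of Lemma \ref{lem4} over the $\lceil t^{0.99}\rceil$ consecutive pairs, so the troublesome $(|I_2|-|I_1|)\log x$ terms telescope to at most $\bigl(|J|/t^{1.98}\bigr)\log x$ --- this is how the unbalanced case is killed. The main term is then bounded by $\gamma_{2k+\lceil t^{0.99}\rceil}\log(ex/\gamma_{2k+\lceil t^{0.99}\rceil})$, where $\gamma_1\leq\cdots\leq\gamma_{t_1}$ are the globally sorted lengths of the good intervals; summing over $k$ hits only every other index of this sequence, which is where the factor $\tfrac{1}{2}$ in $\tfrac{|J|}{2}\log t$ actually comes from. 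If you want to salvage your scheme, you would need to replace the creation-time pairing by something with this telescoping/sorted-sequence structure.
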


\begin{proof}
Let $t_1 = \# \mathcal{I}_{\text{good}}(n_1,n_2,\ldots,n_t)$. Plainly, $t_1 \leqslant t-1$. By Corollary \ref{cor6}, we have
\begin{equation}\label{t1}
t_1 \geqslant t-2t^{0.99}.
\end{equation}
We list the lengths of all intervals in $\mathcal{I}_{\text{good}}(n_1,n_2,\ldots,n_t)$ in ascending order:
\begin{equation}\label{gamma}
\gamma_1 \leqslant \gamma_2 \leqslant \cdots \leqslant \gamma_{t_1}.
\end{equation}
Since $n_1,n_2,\ldots,n_t \in J$, and the intervals in $\mathcal{I}_{\text{good}}(n_1,n_2,\ldots,n_t)$ are disjoint, it holds that
\begin{equation}\label{sum_of_gamma}
\sum_{k=1}^{t_1} \gamma_k \leqslant |J|.
\end{equation}
Let
\begin{equation}\label{t2}
t_2 = \lfloor t-3t^{0.99} \rfloor.
\end{equation}
Then from $(t_1 - t_2+1)\gamma_{t_2} \leqslant  \sum_{t'=t_2}^{t_1} \gamma_{t'} \leqslant |J|$ and \eqref{t1}\eqref{t2} we deduce that
\begin{equation}\label{gamma_t2}
\gamma_{t_2} \leqslant \frac{|J|}{t^{0.99}}.
\end{equation}

For any $t' \leqslant t$, from the definition of $\mathcal{I}(n_1,n_2,\ldots,n_{t'})$ (see Definition \ref{Def}) we immediately see that
\[ \# \left( \mathcal{I}(n_1,n_2,\ldots,n_{t'}) \setminus \mathcal{I}(n_1,n_2,\ldots,n_{t'-1})  \right) \leqslant 2. \]
Let $k$ be an integer in the range
\begin{equation}\label{k_range}
0 \leqslant k \leqslant t_3, \quad t_3 = \left\lfloor \frac{1}{2}\left( t - 5t^{0.99} \right) \right\rfloor,
\end{equation}
then we have
\begin{equation}\label{t1-2k}
\# \left( \mathcal{I}(n_1,n_2,\ldots,n_{t-k}) \cap \mathcal{I}_{\text{good}}(n_1,n_2,\ldots,n_t) \right) \geqslant t_1 - 2k.
\end{equation}
In particular, for $k$ in the range \eqref{k_range}, we have (recall \eqref{t1})
\[ \# \left( \mathcal{I}(n_1,n_2,\ldots,n_{t-k}) \cap \mathcal{I}_{\text{good}}(n_1,n_2,\ldots,n_t) \right) \geqslant 3t^{0.99}. \]
Let $I_1^{(k)},I_2^{(k)},\ldots,I_{\lceil t^{0.99}\rceil + 1}^{(k)}$ be the shortest $\lceil t^{0.99}\rceil + 1$ intervals in $\mathcal{I}(n_1,n_2,\ldots,n_{t-k}) \cap \mathcal{I}_{\text{good}}(n_1,n_2,\ldots,n_t)$ such that
\[ |I_1^{(k)}| \leqslant |I_2^{(k)}| \leqslant \cdots \leqslant |I_{\lceil t^{0.99}\rceil + 1}^{(k)}|. \]
Then, by \eqref{t1-2k} and \eqref{gamma} we obtain that
\begin{equation}\label{I_t_0.99_(k)1}
|I_{i}^{(k)}| \leqslant \gamma_{2k+i}, \text{~for~} i=1,2,\ldots,\lceil t^{0.99}\rceil + 1.
\end{equation}
In particular, taking \eqref{k_range} and \eqref{gamma_t2} into consideration, we have
\begin{equation}\label{I_t_0.99_(k)2}
|I_{\lceil t^{0.99}\rceil + 1}^{(k)}| \leqslant \gamma_{t_2} \leqslant \frac{|J|}{t^{0.99}}.
\end{equation}

Let $k$ be in the range \eqref{k_range}. By Lemma \ref{lem4}, for any $i \in \{1,2,\ldots,\lceil t^{0.99}\rceil\}$, we have
\begin{equation}\label{use_lem4}
\log p \min_{1 \leqslant j \leqslant t-k}\{\operatorname{ord}_{p}(n_{j}! + f(n_{j}))\} \leqslant  |I_i^{(k)}| \log \left( \frac{ex}{|I_{i}^{(k)}|} \right) + (|I_{i+1}^{(k)}|-|I_i^{(k)}| + \frac{x^{0.99}}{t} + O(1))\log x.
\end{equation}
Taking the average of \eqref{use_lem4} over $i \in \{1,2,\ldots,\lceil t^{0.99}\rceil\}$, and notice that the function $y \mapsto y\log(ex/y)$ is increasing on $y \in (0,x)$, it follows that
\[ \log p \min_{1 \leqslant j \leqslant t-k}\{\operatorname{ord}_{p}(n_{j}! + f(n_{j}))\} \leqslant |I_{\lceil t^{0.99}\rceil}^{(k)}| \log \left( \frac{ex}{|I_{\lceil t^{0.99}\rceil}^{(k)}|} \right) + \left(  \frac{|I_{\lceil t^{0.99}\rceil + 1}^{(k)}|}{t^{0.99}}  + \frac{x^{0.99}}{t} + O(1) \right) \log x,\]
then, bounding $|I_{\lceil t^{0.99}\rceil}^{(k)}|$ by \eqref{I_t_0.99_(k)1} and bounding $|I_{\lceil t^{0.99}\rceil + 1}|$ by \eqref{I_t_0.99_(k)2}, we obtain that
\begin{equation}\label{t-k}
\log p  \min_{1 \leqslant j \leqslant t-k}\{\operatorname{ord}_{p}(n_{j}! + f(n_{j}))\} \leqslant \gamma_{2k+\lceil t^{0.99}\rceil} \log \left(  \frac{ex}{\gamma_{2k+\lceil t^{0.99}\rceil}} \right) + \left(  \frac{|J|}{t^{1.98}} + \frac{x^{0.99}}{t} + O(1)  \right) \log x.
\end{equation}

Summing \eqref{t-k} over $k$ in the range \eqref{k_range}, we obtain
\begin{align}\label{ineq2_1}
&\log p \sum_{k=0}^{t_3}  \min_{1 \leqslant j \leqslant t-k}\{\operatorname{ord}_{p}(n_{j}! + f(n_{j}))\} \notag \\
&\leqslant \left( \sum_{k=0}^{t_3} \gamma_{2k+\lceil t^{0.99}\rceil} \log \left(  \frac{ex}{\gamma_{2k+\lceil t^{0.99}\rceil}} \right) \right) + \left(  \frac{|J|}{t^{0.98}} + x^{0.99} + O(t)  \right) \log x \notag \\
&\leqslant \left( \sum_{k=0}^{t_3} \gamma_{2k+\lceil t^{0.99}\rceil} \log \left(  \frac{ex}{\gamma_{2k+\lceil t^{0.99}\rceil}} \right) \right) + \frac{x\log x}{t^{0.98}} + O(x),
\end{align}
where in the last inequality we used $|J| < x$ and $t \leqslant C_0 x^{\frac{2}{3}}$.

Finally, since the function $y \mapsto y\log(ex/y)$ is concave and increasing on $y \in (0,x)$, and since
\begin{align*}
\sum_{k=0}^{t_3} \gamma_{2k+\lceil t^{0.99}\rceil} &\leqslant \frac{1}{2} \sum_{k=0}^{t_3} \left( \gamma_{2k+\lceil t^{0.99}\rceil} + \gamma_{2k+\lceil t^{0.99}\rceil+1} \right) \\
&< \frac{1}{2} \sum_{k=1}^{t_1} \gamma_k \\
&\leqslant \frac{|J|}{2},
\end{align*}
we have (by the Jensen's inequality and the monotonicity)
\begin{align}\label{ineq2_2}
\sum_{k=0}^{t_3} \gamma_{2k+\lceil t^{0.99}\rceil} \log \left(  \frac{ex}{\gamma_{2k+\lceil t^{0.99}\rceil}} \right) &< (t_3+1) \cdot \frac{|J|/2}{t_3+1} \log \left(  \frac{ex}{(|J|/2)/(t_3+1)}  \right) \notag \\
&< \frac{|J|}{2} \log \left(  \frac{ext}{|J|/2} \right) = \frac{|J|}{2} \log t + \frac{|J|}{2} \log \left(  \frac{ex}{|J|/2} \right) \notag \\
&<  \frac{|J|}{2} \log t + x,
\end{align}
where in the last inequality we used $|J|/2<x$ and the the function $y \mapsto y\log(ex/y)$ is increasing on $y \in (0,x)$.

By \eqref{ineq2_1}, \eqref{ineq2_2}, and the fact $t-t_3 <  \lceil \left(\frac{1+\varepsilon_0}{2}\right)t \rceil$ (recall $t_3 = \left\lfloor \frac{1}{2}\left( t - 5t^{0.99} \right) \right\rfloor$ and $t > (10/\varepsilon_0)^{100}$), we complete the proof of \eqref{ineq2}.
\end{proof}

\section{Proof of Theorem \ref{thm1}}

Recall the definition of $B(\lambda)$ and $Z$ (see \eqref{Z}), we know that $P(Z) \leqslant \lambda x$.

For a prime number $p$, we denote
\[ N_p = \{ n \in \mathbb{N}:~n \in [\varepsilon_0x,x], n \notin B(\lambda), \text{~and~} p \mid (n!+f(n)) \}. \]
Then,
\begin{equation}\label{Np0}
(\log p)\operatorname{ord}_p(Z) = \log p \sum_{n \in N_p} \operatorname{ord}_p(n!+f(n)).
\end{equation}

If $n \geqslant p$ and $p \mid (n!+f(n))$, then $f(n) \equiv 0 \pmod{p}$. Let $a \neq 0$ be the leading coefficient of $f(X)$. If $p>|a|$, then $f(X) \pmod{p}$ has degree $\deg f$. Hence, for any two real numbers $y_2>y_1 \geqslant p$, we have $\# (N_p \cap [y_1,y_2]) \leqslant ((y_2-y_1)/p + 1)\deg f$. That is,
\begin{equation}\label{N_leq_x_over_p}
\# (N_p \cap [y_1,y_2]) \ll \frac{y_2-y_1}{p} + 1, \quad \text{for any~} y_2 >y_1 \geqslant p.
\end{equation}
If $p \leqslant |a|$, then $\# (N_p \cap [y_1,y_2]) \leqslant y_2-y_1+1 \leqslant |a|(y_2-y_1)/p + 1$, so \eqref{N_leq_x_over_p} also holds.

Recall that $|f(n)| \leqslant n^{C_1}$ for all $n \geqslant 2$. Since $f(n) \neq 0$ for $n \geqslant n_0$, we have $\operatorname{ord}_p(f(n)) \leqslant C_1 \log n/\log p$. We can take a large constant $C_2 > n_0$ depending only on $C_1$ and $n_0$ (so $C_2$ depends only on $f$), such that for any prime $p$ and any integer $n \geqslant C_2p$, it holds that $n/p-1 > C_1 \log n/\log p$. Since $\operatorname{ord}_p(n!) > n/p - 1$, we deduce that for any prime $p$ and any $n \geqslant C_2p (> n_0)$, we have $\operatorname{ord}_p(n!+f(n)) = \operatorname{ord}_p(f(n)) = O(\log n/\log p)$. Therefore, for any prime $p$,
\begin{equation}\label{Np1}
\log p \sum_{n \in N_p \atop n \geqslant C_2p}\operatorname{ord}_p(n!+f(n)) \ll  \left( \frac{x}{p} + 1 \right) \cdot \log x \ll x\log x.
\end{equation}

By \eqref{N_leq_x_over_p}, we have $|N_p \cap [p,C_2p)| = O(1)$. Since $\log (n!+f(n)) = O(x\log x)$ for $n_0 < n \leqslant x$, we have
\begin{equation}\label{Np2}
\log p \sum_{n \in N_p \atop p \leqslant n < C_2p}\operatorname{ord}_p(n!+f(n))\ll 1 \cdot x\log x \ll x\log x.
\end{equation}

For any $n \in N_p \cap [1,p)$, by the definition of $B(\lambda)$, we have $p \leqslant \lambda n$. We denote the interval
\begin{equation}\label{Jp}
J_p = \left[\max\{p/\lambda, \varepsilon_0 x\}, \min\{ p,x \}\right).
\end{equation}
Then,
\[ N_p \cap [1,p) \subset J_p.\]

We claim that
\begin{equation}\label{Np3}
\log p \sum_{n \in N_p \atop n<p}\operatorname{ord}_p(n!+f(n)) \leqslant \frac{1}{9\log (2/(1+\varepsilon_0))}|J_p| \log^2 x + O(x\log x).
\end{equation}
In fact, if $N_p \cap [1,p) = \emptyset$, then there is nothing to prove. In the following, we assume that $T:= \#(N_p \cap [1,p)) \geqslant 1$ and
\[  N_p \cap [1,p) = \{ n_1,n_2,\ldots,n_{T} \}. \]
Relabeling if necessary, we can assume that
\begin{equation}\label{ni}
\operatorname{ord}_{p}(n_1! + f(n_1)) \geqslant \operatorname{ord}_{p}(n_2! + f(n_2)) \geqslant \cdots \geqslant \operatorname{ord}_{p}(n_T! + f(n_T)).
\end{equation}
If $|J_p| \geqslant 1$, then Lemma \ref{lem2} implies that $T \leqslant C_0 |J_p|^{\frac{2}{3}} < C_0 x^{\frac{2}{3}}$. If $|J_p| < 1$, then $T \leqslant  1 < C_0 x^{\frac{2}{3}}$. So it is always true that
\begin{equation}\label{T}
T <  C_0 x^{\frac{2}{3}}.
\end{equation}
Let $k^{*}$ be the least non-negative integer such that
\[ \left( \frac{1+\varepsilon_0}{2} \right)^{k^*} T < \left( \frac{10}{\varepsilon_0} \right)^{100}, \]
and we denote
\[ T_k = \left( \frac{1+\varepsilon_0}{2} \right)^{k} T, \quad k=0,1,\ldots,k^{*}. \]
Note that
\begin{equation}\label{k_star}
k^{*} = \frac{\log T}{\log (2/(1+\varepsilon_0))} + O(1).
\end{equation}
For any $0 \leqslant k < k^{*}$, by Lemma \ref{lem7} (with $t = \lfloor T_k \rfloor$ and $J = J_p$) and \eqref{ni}, we have
\begin{equation}\label{Np31}
\log p \sum_{i = \lfloor T_{k+1} \rfloor + 1}^{\lfloor T_k \rfloor} \operatorname{ord}_{p}(n_i! + f(n_i)) \leqslant \frac{|J_p|}{2}\log T_k + \frac{x\log x}{\lfloor T_k \rfloor^{0.98}} + O(x).
\end{equation}
Since $\log (n!+f(n)) = O(x\log x)$ for $n_0 < n \leqslant x$ and $T_{k^{*}} = O(1)$, we have
\begin{equation}\label{Np32}
\log p \sum_{i=1}^{\lfloor T_{k^{*}} \rfloor} \operatorname{ord}_{p}(n_i! + f(n_i)) = O(x\log x).
\end{equation}
Summing \eqref{Np31} over $k=0,1,\ldots,k^{*}-1$ and \eqref{Np32}, we obtain that
\begin{equation}\label{Np33}
\log p \sum_{i=1}^{T} \operatorname{ord}_{p}(n_i! + f(n_i)) = \frac{|J_p|}{2} \sum_{k=0}^{k^{*}-1} \log T_k + x\log x \sum_{k=0}^{k^{*}-1} \frac{1}{\lfloor T_k \rfloor^{0.98}} + O(x\log x) + O(k^{*}x).
\end{equation}
We estimate each term in the right hand side of \eqref{Np33} as follows. Since
\begin{align*}
\sum_{k=0}^{k^{*}-1} \log T_k &= \sum_{k=0}^{k^{*}-1} \left( \log T - k\log\left( \frac{2}{1+\varepsilon_0} \right) \right) = k^{*}\log T - \frac{(k^{*}-1)k^{*}}{2} \log \left( \frac{2}{1+\varepsilon_0} \right) \\
&= \frac{1}{2\log(2/(1+\varepsilon_0))}\log^2 T + O(\log T) \quad (\text{by \eqref{k_star}}) \\
& \leqslant \frac{2}{9\log(2/(1+\varepsilon_0))} \log^{2} x + O(\log x),  \quad (\text{by \eqref{T}})
\end{align*}
we have
\[ \frac{|J_p|}{2} \sum_{k=0}^{k^{*}-1} \log T_k \leqslant \frac{1}{9\log(2/(1+\varepsilon_0))} |J_p| \log^2 x + O(x\log x). \]
Since $T_k$ decreases exponentially, we have
\[ \sum_{k=0}^{k^{*}-1} \frac{1}{\lfloor T_k \rfloor^{0.98}} \ll  \frac{1}{\lfloor T_{k^{*}-1} \rfloor^{0.98}} \ll 1. \]
By \eqref{k_star} and \eqref{T},
\[ k^{*}x = O(x\log x). \]
Hence, plugging these estimates above into \eqref{Np33}, we deduce that
\[ \log p \sum_{i=1}^{T} \operatorname{ord}_{p}(n_i! + f(n_i)) \leqslant \frac{1}{9\log (2/(1+\varepsilon_0))}|J_p| \log^2 x + O(x\log x), \]
this is exactly \eqref{Np3} we claimed.

Combining \eqref{Np0}, \eqref{Np1}, \eqref{Np2} and \eqref{Np3}, we have
\begin{equation}\label{ineq3}
(\log p) \operatorname{ord}_p(Z) \leqslant \frac{1}{9\log (2/(1+\varepsilon_0))}|J_p| \log^2 x + O(x\log x),
\end{equation}
which holds for any prime number $p$. Since $P(Z) \leqslant \lambda x$ and $|J_p| \leqslant \min\{ p,x \} - p/\lambda$, summing \eqref{ineq3} over primes $p \leqslant \lambda x$ we obtain that
\[\log Z \leqslant  \frac{1}{9\log (2/(1+\varepsilon_0))} \left(  \sum_{p \leqslant x} (1-1/\lambda)p  + \sum_{x<p\leqslant \lambda x} (x - p/\lambda) \right) \log^2 x + O(x^2), \]
where for the error term we used $\pi(\lambda x) \ll x/\log x$. By the fact $\pi(y) = (1+o(1))y/\log y$ and
\[ \sum_{p \leqslant y} p = \left( \frac{1}{2}+o(1) \right) \frac{y^2}{\log y} \quad \text{as~} y \rightarrow +\infty, \]
we deduce that
\begin{equation}\label{logZ2}
\log Z \leqslant \left( \frac{1}{9\log (2/(1+\varepsilon_0))}\frac{\lambda - 1}{2} + o(1) \right) x^2 \log x, \quad \text{as~} x \rightarrow +\infty.
\end{equation}

By comparing \eqref{logZ1} with \eqref{logZ2}, we have
\[ \frac{1}{9\log (2/(1+\varepsilon_0))}\frac{\lambda - 1}{2} \geqslant \frac{1}{2} - 2\varepsilon_0, \]
which implies that
\[ \lambda \geqslant 1+9(1-4\varepsilon_0)\log\left( \frac{2}{1+\varepsilon_0} \right), \]
but this contradicts $\lambda = 1+9\log 2 - 100\varepsilon_0$.

Hence, the above contradiction shows that the lower asymptotic density of $B(\lambda)$ must be greater than or equal to $\varepsilon_0$. The proof of Theorem \ref{thm1} is complete.

\end{document}